\documentclass[10pt,a4paper]{article}
\usepackage{latexsym,amsfonts,amsmath,amsthm,amssymb,mathbbol}
\usepackage{bbm}
\usepackage{wasysym}
\usepackage[pdftex]{graphicx}
\usepackage{cite}

\usepackage[english]{babel}
\usepackage{amsmath}
\usepackage{mathtools}
\usepackage[latin1]{inputenc}
\usepackage{fancybox}

\usepackage{times}
\usepackage[T1]{fontenc}
\usepackage{url}

\title{On an analytical method for the satellite problem revisited }
\author{
  \qquad
  Niccol\`o Pilloni and
  Claudio Saccon
  \footnote{Dipartimento di Matematica,
    Largo Bruno Pontecorvo 5, I56127 Pisa, ITALY, e-mail: claudio.saccon@unipi.it}
  }
\date{}

%
\newcommand{\real}{\ensuremath{\mathbb{R}}}

\newcommand{\secl}[1]{\ensuremath{\left[#1\right]_{sec}}}
\newcommand{\perd}[1]{\ensuremath{\left[#1\right]_{per}}}

\newcommand{\aaa}{a}
\newcommand{\incl}{\ensuremath{I}}
\newcommand{\rr}{\mathbf{r}}

\newcommand{\KK}{\mathcal{K}}
\newcommand{\HH}{\mathcal{H}}
\newcommand{\jj}{\mathcal{J}}

\newcommand{\X}[1][{}]{\ensuremath{\mathbb{X}}}

\newcommand{\norm}[1]{\ensuremath{\left\lVert #1 \right\rVert}}

%
%

%
%



%
%
\swapnumbers
\theoremstyle{plain}
\newtheorem{thm}{Theorem}[section]

\newtheorem{prop}[thm]{Proposition}

\theoremstyle{definition}
\newtheorem{dfn}[thm]{Definition}

\theoremstyle{remark}
\newtheorem{rmk}[thm]{Remark}

%
%
%


\newcommand{\FF}{\mathbf{F}}

\begin{document}

\maketitle

\section{Introduction}

In this note we discuss an \textbf{analitical} method, which goes back to  Dirk Brouwer (1959 see \cite{Brouwer1959}), to compute the trajectories of an artificial satellite rotating around 
the Earth when
the oblate shape  is taken into account. Despite being somewhat old 
these kind of techniques are still used
nowadays in software packages  like SGP4 (see \cite{SGP4}) used for the tracking of space debris. Actually the models on which these softwares are based
are the result of later generalizations 
(see \cite{BrouwerHori1961,Lane1965,LaneCrawford1969}) which include 
the effect of the atmosferic drag. We don't cover the general theory
here -- we just remark that the problem with drag is treated as a pertubation of the dragless case.

As well known the Hamiltonian associated with the ``non spherical Earth'' can 
be written as a series of ``harmonics'' of three types:
zonal (only depending on the latitude), sectorial (only depending on the longitude), and tesseral (depending on both) each 
having a suitable coefficient. The most important term (apart
from the zero-th one which corresponds
to the perfectly spherical case)
is the coefficient $J_2$ of the first zonal harmonic.
The main idea in \cite{Brouwer1959} is looking for a canonical tranformation to a new set of variables such that the new Hamiltonian only depends on the momenta. If this is possible
then the new Hamilton equations become trivial: the momenta are constant while the coordinates are linear (this goes with a price: 
the transformation is usually complicated and made up of a very long list of terms). 
To accomplish this a Brouwer looks for a generating function of the 
form of a power series in the coefficient $J_2$. 
Using some calculus involving composition of power series he
derives some conditions on the coefficients of such a series so that the associated canonical tranformation has the 
desired property. In the second part of \cite{Brouwer1959} also
takes into account the coefficient $J_4$ of the second zonal
harmonic and in the last part some sectorial harmonic are considered.
It has to be said that Brouwer credits
Hugo von Zeipel in \cite{VonZeipel1916} for 
the above mentioned idea of the power series expansion. Unfortunately we were not able to find the original article so our main source is \cite{Brouwer1959}: nevetheless we use the term ``von Zeipel method'' to indicate the technique we want to discuss.

The porpose of the present work is to give a general and coincise 
presentation of the von Zeipel method
by using  a more up-to-date vectorial notation. We use a multi dimensional power series expansion, corresponding to a generic set of parameters 
(not just $J_2$). In this way we can derive a more transparent general formula, which allows to replicate
the computations of \cite{Brouwer1959} ``all at once''. Since such computation involve a lot of
derivatives, we have used Wolfram Mathematica to 
perform the actual work. In this way we could also compute a second order term for the generating function that \cite{Brouwer1959} omits. The
corresponding Mathematica notebook that we wrote for this is publicly avaliable at the link:
https://www.wolframcloud.com/obj/claudiosaccon/VonZeipel2.nb
\bigskip

In order to make this note more self--contained, we added an appendix where we recall the definitions and the main properties of canonical transformations and generating functions.
\bigskip

We remind that, if $f:\real^N\to\real^M$, the \emph{Jacobian matrix} of $f$ is defined by:
%

\[
 J_f(x)=\frac{\partial f}{\partial x}(x):=
 \begin{pmatrix}
  \dfrac{\partial f_1}{\partial x_1}(x)
  &\cdots
  &
  \dfrac{\partial f_1}{\partial x_N}(x)
  \\
  \vdots&\ddots&\vdots
  \\
  \dfrac{\partial f_M}{\partial x_1}(x)
  &\cdots
  &
  \dfrac{\partial f_M}{\partial x_N}(x)
 \end{pmatrix}.
\]
If $M=1$, i.e. if $f$ is a scalar function, we sometimes consider the \emph{gradient } of $f$:
\[
 \nabla f(x):=J_f(x)^t=
 \begin{pmatrix}
  \dfrac{\partial f}{\partial x_1}(x)
  \\
  \vdots
  \\
  \dfrac{\partial f}{\partial x_N}(x)
 \end{pmatrix}.
\]
When more than two indices are needed (for instance when
we have to deal with second order derivatives of vectors or matrices) we use 
Einstein's convention of repeated indices.
In this note elements of $\real^N$ are regarded as
``vectors'', i.e. as
$N$ by $1$ (column) matrices.


\section{The satellite problem}

The satellite problem can be written as:
  \begin{equation}\label{eqn:problem-debris}
   \ddot\rr=-\frac{\mu\rr}{|\rr|^3}-\nabla U(\rr)+
   \FF(\rr)
  \end{equation}
  The term $\mu$ stands for $GM$ where $M$ is the
  mass of the Earth and $G$ is the Gravitational Constant.
  In  \eqref{eqn:problem-debris} $\nabla U$ is an additional conservative term taking into
  account the non spherical shape of the Earth. In the case of the ``oblated Earth'' 
  \[
   U(\rr)=
   \sum_{n=2}^\infty\frac{\mu}{|\rr|}
   J_n \left(\frac{R}{|\rr|}\right)^nP_n\left(\sin(\beta)\right)
  \] 
  In this formula $\beta$ denotes the latitude, $R$ is the radius oth the Earth, and $P_n$ are the Legrendre polinomials. In this
  description the potential is still rotationally symmetric
  (only ``zonal harmonics are considered). The even terms are also symmetric with respect to the equator, while 
  the odd ones introduce an asymmetry between the north and the
  south emisphere. In \cite{Brouwer1959} terms up to $J_5$  are 
  considered (with different names).  The methods presented in this note rely on the fact that the coefficient $J_n$ and are small.
  
  The term $\FF$ allows to take into account external, nonconservative forces acting on the satellite. The most important
  effect of this type is the resistance of the athmosphere, usually
  called \emph{drag}. 
  
  In this note we drop the term $\FF$. In 
  \cite{BrouwerHori1961,Lane1965,LaneCrawford1969}
  the problem with drag is considered (as a perturbation of the case without drag), but the
  technique we are going to present here 
  only works in the case $\FF=0$. In the Appendix, 
  however, we show how the full problem gets 
  transformed under a canonical tranformation, so 
  the reader can understand how this technique can be relevant in the problem with drag.
  \bigskip

\section{The Von Zeipel method}  
Now we take a more general point of view.
Let $\HH:\real^N\times\real^N\times\real^M\to\real$,
$\HH=\HH(p,q,\jj)=\HH(v,\jj)$ be a Hamiltonian depending on M parameters $J_1,\dots,J_M$ which  form the vector $\jj$ 
($J_i$ will be supposed to be small). We remind that for
an $M\times M$ matrix the expression $A\jj^2$ denotes
$\jj^t A\jj$.
We suppose that 
\[
 \HH(v,\jj)=
 \HH_0(v)+
 \HH_1(v)\cdot \jj+
 \HH_2(v)\jj^2+
 0(|k^2|).
\]
where $\HH_0:\real^{2N}\to\real$ (scalar), 
$\HH_1:\real^{2N}\to\real^{M}$ (vector valued), and 
$\HH_2:\real^{2N}\to\real^{M^2}$ (matrix valued).
Of course (by Taylor's expansion:
\[
 \HH_0(v)=\HH(v,0),\quad
 \HH_1(v)=\nabla_k\HH(v,0),\quad
 \HH_2(v)=2\nabla_k^2\HH(v,0)
\]
($\nabla_k$ and $\nabla^2_k$ denote the gradient and the Hessian
matrix with respect to the $k$ variable).

We assume that:
\begin{gather}\label{eqn:ipotesi-zeipel-1}
 \HH_0\mbox{ does not depend on $q$};
 \\
 \label{eqn:ipotesi-zeipel-2}
 \HH_1\mbox{ and }\HH_2\mbox{ are $2\pi$ periodic in each }q_i
 \quad i=1,\dots,N.
\end{gather}
The periodicity assumption means that the variables
$q_1,\dots,q_N$ are \emph{angles}. Given any function
$f=f(p,q)$ we can  define
the \emph{secular part } of $f$ as:
\[
 f_{sec}(p):=
 \frac{1}{(2\pi)^N}
 \int_0^{2\pi}\cdots\int_0^{2\pi}f(p,q)\,dq_1\dots dq_N
\]
(i.e the average in $q$ over the cube
$[0,2\pi]\times\cdots\times[0,2\pi]$) and the \emph{periodic part} of $f$:
\[
 f_{per}(p,q):=f(p,q)-f_{sec}(p).
\]
It is clear that $f\mapsto f_{sec}$ and $f\mapsto f_{per}$
are linear, and that  $\secl{f_{sec}}=f_{sec}$, 
$\perd{f_{per}}=f_{per}$, $\perd{f_{sec}}=0=\secl{f_{per}}$.
Moreover $f=f_{sec}$ if and only if $f$ does not depend
on $q$.

The purpose of this section is finding a canonical transformation
such that the new Hamiltonian $\KK$ only depends on the (new)
momenta $P$. To this aim 
we look for a generating function of the form:
\[
 S(P,q,\jj)=
 S_0(P,q)+
 S_1(P,q)\cdot\jj+
 S_2(P,q)\jj^2+o(|\jj|^2)
\]
(again with  $S_0$ is scalar, $S_1$ an $M$ vector, and $S_2$ 
an $M\times M$ matrix. For the moment 
we assume that such an $S$ is given and derive some 
relationships between the old  Hamiltonian $\HH$ and the new one 
$\KK$.
We then use such relationships to choose $S_0$, $S_1$, and $S_2$
in such a way that $\KK$ exhibis the desired property.

If $V(v)=\begin{pmatrix}P(p,q)\\Q(p,q)\end{pmatrix}$ be  the new variables induced by $S$, and $\KK=\KK(V)=\KK(P,Q)$ be the new 
Hamiltonian. We can write
\[
 \KK(V,k)=
 \KK_0(V)+
 \KK_1(V)\cdot\jj+
 \KK_2(V)\jj^2+
 0(|\jj^2|).
\]
where $\KK_0$, $\KK_1$, and $\KK_2$ have the same properties of
$\HH_0$, $\HH_1$, and $\HH_2$. 
We know that:
\[
 \KK\left(P(p,q,\jj),\frac{\partial S(P(p,q),q,\jj)}{\partial P},k\right)=
 \HH\left(\frac{\partial S(P(p,q),q,\jj)}{\partial q},q,\jj\right)
 \quad\forall p, q, \jj.
\]
In the following we are going to keep things coincise by skipping
some of  the ``inner'' variables. 
By Taylor's expansion:
\begin{multline*}
 \HH\left(\frac{\partial S}{\partial q},q,\jj\right)=
 \HH_0\left(\frac{\partial S}{\partial q},q\right)+
 \HH_1\left(\frac{\partial S}{\partial q},q\right)^t\jj+
 \HH_2\left(\frac{\partial S}{\partial q},q\right)\jj^2+
 0(|\jj^2|)=
 \\
 \HH_0
 +
 \frac{\partial \HH_0}{\partial p_i}
 \frac{\partial S_{1,k}}{\partial q_i}J_k+
 \frac{\partial \HH_0}{\partial p_i}
 \frac{\partial S_{2,k,h}}{\partial q_i}J_kJ_h+
 \frac{1}{2}
 \frac{\partial^2 \HH_0}{\partial p_i\partial p_j}
 \frac{\partial S_{1,k}}{\partial q_i}
 \frac{\partial S_{1,h}}{\partial q_j}J_kJ_h+
 \\
 \HH_{1,k}
 J_k+
 \frac{\partial \HH_{1,h}}{\partial p_i}
 \left(\frac{\partial S_0}{\partial q},q\right)
 \frac{\partial S_{1,k}}{\partial q_i}J_kJ_h+
 \HH_{2,k,h}
 J_kJ_h+
 o(|\jj|^2).
\end{multline*}
($\HH_i$ are computed in 
$\left(\frac{\partial S_0}{\partial q},q\right)$). 
In the same way:
\begin{multline*}
 \KK\left(P,\frac{\partial S}{\partial P},k\right)=
 \KK_0
 +
 \frac{\partial \KK_0}{\partial Q_i}
 \frac{\partial S_{1,k}}{\partial P_i}J_k+
 \frac{\partial \KK_0}{\partial Q_i}
 \frac{\partial S_{2,k,h}}{\partial P_i}J_kJ_h+
 \\
 \frac{1}{2}
 \frac{\partial^2 \KK_0}{\partial Q_i\partial Q_j}
 \frac{\partial S_{1,k}}{\partial P_i}
 \frac{\partial S_{1,h}}{\partial P_j}J_kJ_h+
 \KK_{1,k}
 J_k
 +
 \frac{\partial \KK_1}{\partial Q_i}
 \frac{\partial S_{1,k}}{\partial P_i}
 J_kJ_h+
 \KK_{2,k,h}
 J_kJ_h+
 o(|\jj|^2)
\end{multline*}
($\KK_i$ are computed in $\left(P,\frac{\partial S_0}{\partial P}\right)$).
%
%
%
%
%
We take:
\[
 S_0(P,q)=P\cdot q
\]
(if no other terms were present, then the associated canonical
transformation would be the identity) and we try to find
$S_1$ and $S_2$ such that $\KK(V,k)=\KK(P,k)$, that is $\KK$
does not depend on $Q$. By equating the terms having
corresponding powers of $\jj$, we get:
\begin{gather}\label{condition-A}
 \KK_0(P,Q)=\KK_0(P)=\HH_0(P)
 \\
 \label{condition-B}
 \frac{\partial \KK_0}{\partial Q_i}
 \frac{\partial S_{1,k}}{\partial P_i}+
 \KK_{1,k}=
 \frac{\partial \HH_0}{\partial p_i}
 \frac{\partial S_{1,k}}{\partial q_i}+
 \HH_{1,k}
 \\
 \notag
 \frac{\partial \KK_0}{\partial Q_i}
 \frac{\partial S_{2,k,h}}{\partial P_i}+
 \frac{1}{2}
 \frac{\partial^2 \KK_0}{\partial Q_i\partial Q_j}
 \frac{\partial S_{1,k}}{\partial P_i}
 \frac{\partial S_{1,h}}{\partial P_j}+
 \frac{\partial S_{1,k}}{\partial P_i}
 \frac{\partial \KK_{1,h}}{\partial Q_i}+
 \KK_{2,k,h}
 \\\label{condition-C}
 =
 \\
 \notag
 \frac{\partial \HH_0}{\partial p_i}
 \frac{\partial S_{2,k,h}}{\partial q_i}+
 \frac{1}{2}
 \frac{\partial^2 \HH_0}{\partial p_i\partial p_j}
 \frac{\partial S_{1,k}}{\partial q_i}
 \frac{\partial S_{1,h}}{\partial q_j}+ 
 \frac{\partial S_{1,k}}{\partial q_i}
 \frac{\partial \HH_{1,k}}{\partial p_i}+
 \HH_{2,k,h}
\end{gather}
(with $k$ and $h$ ranging between $1$ and $M$).
Using  
\eqref{condition-A} we have found $\KK_0$ (which only depends
on $P$). 
Then all derivatives of $\KK_0$ in \eqref{condition-B} and \eqref{condition-C} are zero. 
If $\KK_1$ and $\KK_2$ are to be undependent of $Q$, then both \eqref{condition-B} and 
\eqref{condition-C}
split into  a pair of conditions:
\begin{gather}
\label{condition-B-S}
 \KK_{1,k}(P)=
 \secl{
 \frac{\partial S_{1,k}}{\partial q_i}
 \frac{\partial \HH_0}{\partial p_i}+
 \HH_{1,k}
 }
 \\
 \label{condition-B-P}
 0=
 \perd{ \frac{\partial S_{1,k}}{\partial q_i}
 \frac{\partial \HH_0}{\partial p_i}+
 \HH_{1,k}
 }
 \\
 \label{condition-C-S}
 \KK_{2,k,h}(P)=
 \secl{
 \frac{\partial \HH_0}{\partial p_i}
 \frac{\partial S_{2,k,h}}{\partial q_i}+
 \frac{1}{2}
 \frac{\partial^2 \HH_0}{\partial p_i\partial p_j}
 \frac{\partial S_{1,k}}{\partial q_i}
 \frac{\partial S_{1,h}}{\partial q_j}+ 
 \frac{\partial S_{1,k}}{\partial q_i}
 \frac{\partial \HH_{1,h}}{\partial p_i}+
 \HH_{2,k,h}
 }
 \\
 \label{condition-C-P}
 0=
 \perd{
 \frac{\partial \HH_0}{\partial p_i}
 \frac{\partial S_{2,k,h}}{\partial q_i}+
 \frac{1}{2}
 \frac{\partial^2 \HH_0}{\partial p_i\partial p_j}
 \frac{\partial S_{1,k}}{\partial q_i}
 \frac{\partial S_{1,h}}{\partial q_j}+ 
 \frac{\partial S_{1,k}}{\partial q_i}
 \frac{\partial \HH_{1,h}}{\partial p_i}+
 \HH_{2,k,h}
 }
\end{gather}
A possible way to verify \eqref{condition-B-S} 
and  \eqref{condition-B-P} is by imposing:
\begin{gather}
 \KK_{1,k}(P):=\secl{\HH_{1,k}(P)}\label{condition-B-S1}
 \\
 \frac{\partial \HH_0}{\partial p_i}(P)
 \frac{\partial S_{1,k}}{\partial q_i}(P,q)=
 -\perd{\HH_{1,k}(P,q)}
 \label{condition-B-P1}
\end{gather}
In order to solve equation \eqref{condition-B-P1}:
let:
\begin{gather*}
 w(P):=\frac{\partial \HH_0}{\partial p}(P),
 \quad,\quad
 \hat w(P):=
 \norm{\dfrac{\partial \HH_0}{\partial p}(P)}^{-1}
 \dfrac{\partial \HH_0}{\partial p}(P)=
 \frac{w(P)}{\|w(P)\|}
\end{gather*}
(notice that the ``coefficient'' $w$ neither depends on $q$ nor on $k$) and define $S_1$ by:
\begin{multline*}
 S_{1,k}(P,q):=
 -\frac{1}{\|w(P)\|}
 \int_0^{q\cdot \hat w}
 \perd{
 \HH_{1,k}
 \left(
 P,
 q-(1-t)
 (\hat w(P)\cdot q)
 \hat w(P)
 \right)
 }
 \,dt=
 \\
 \frac{\secl{\HH_{1,k}}(P)}{\norm{w(P)}}\hat w\cdot q
 -\frac{1}{\norm{w(P)}}
 \int_0^{q\cdot \hat w}
 \HH_{1,k}
 \left(
 P,
 q-(1-t)
 (\hat w(P)\cdot q)
 \hat w(P)
 \right)
 \,dt.
\end{multline*}
It is then simple to check that $S_1$
 solves Equation \eqref{condition-B-P1}.
So we can find $\KK_1$ and $S_1$.
We can use the same idea to find $\KK_2$ and $S_2$: 

%


\begin{gather}
  \label{condition-C-S-2}
 \KK_{2,k,h}(P)=
 \secl{
 \frac{1}{2}
 \frac{\partial^2 \HH_0}{\partial p_i\partial p_j}
 \frac{\partial S_{1,k}}{\partial q_i}
 \frac{\partial S_{1,h}}{\partial q_j}+ 
 \frac{\partial S_{1,k}}{\partial q_i}
 \frac{\partial \HH_{1,h}}{\partial p_i}+
 \HH_{2,k,h}
 }
 \\
 \label{condition-C-P-2}
 \frac{\partial \HH_0}{\partial p_i}
 \frac{\partial S_{2,k,h}}{\partial q_i}=
 -
 \perd{
 \frac{1}{2}
 \frac{\partial^2 \HH_0}{\partial p_i\partial p_j}
 \frac{\partial S_{1,k}}{\partial q_i}
 \frac{\partial S_{1,h}}{\partial q_j}+ 
 \frac{\partial S_{1,k}}{\partial q_i}
 \frac{\partial \HH_{1,h}}{\partial p_i}+
 \HH_{2,k,h}
 }
\end{gather}

Again \eqref{condition-C-S-2} provides the expression of $\KK_2$
(undependent of $Q$), while \eqref{condition-C-P-2}
can be used to find $S_{2,k,h}$ (although in \cite{Brouwer1959} $S_2$
is neglected ????).

\section{The solution of the satellite problem without drag}

Let $r:=|\rr|$, $\aaa$ denote the osculating semi-major axis,
$e$ denote the eccentricity, and $\nu$ denote the true 
anomaly.

We consider the \emph{Delaunay variables}:
\begin{align*}
 (p_1=)\quad L&:=(\mu\aaa)^{1/2},
 &
 (q_1=)\quad l&:=\mbox{ mean anomaly},
 \\
 (p_2=)\quad G&:=L(1-e^2)^{1/2},
 &
 (q_2=)\quad g&:=\mbox{ argument of the pericenter},
 \\
 (p_3=)\quad H&:=G\cos(\incl),
 &
 (q_3=)\quad h&:=\mbox{ longitude of ascending node}.
\end{align*}

We first consider the problem with $J_n=0$ if $n\neq2$ (only $J_2$ is present).
In terms of $(L,G,H,l,g,h)$ the Hamiltonian takes the form:
\[
 \HH(L,G,H,l,g,h):=
 \HH_0(L)+\HH_1(L,G,H,l,g)J_2
\]
where:
\begin{align}
 \HH_0(L)=&
 \frac{\mu^2}{2L^2}
 \\
 \HH_1(L,G,H,l,g)=&
 \frac{\mu^4R^2}{4L^6G^2}
 \frac{\aaa^3}{\rr^3}
   \left[
     \left(
       -G^2+3H^2
     \right)
     +
     \right. 
     \\
     &\qquad+
     \left.
     3
     \left(
       G^2-H^2
     \right)
     \cos(2g+2\nu)
   \right]
   \notag
\end{align}

Notice that \eqref{eqn:ipotesi-zeipel-1} and \eqref{eqn:ipotesi-zeipel-2} hold. To maintain the
notations of \cite{Brouwer1959} we use doubly
primes to   denote the new variables:
$P_1=L''$,  $P_2=G''$, $P_2=H''$, $Q_1=l''$, $Q_2=g''$, $Q_3=h''$ (notice that we are skipping the single prime 
variables  since we are doing the two steps at once). So $\KK=\KK(L'',G'',H'')$ and 
$S=S(L'',G'',H'',l,g,h)$.

Of course the term $\dfrac{\aaa^3}{\rr^3}$ and the 
variable $\nu$ need to be expressed in terms of the
Delaunay variables. For this we can use the Fourier series expansions:
\begin{align}
 \label{eqn:series-espansion-1}
 \frac{\aaa^3}{\rr^3}&=
 \frac{L^3}{G^3}+
 \sum_{n=1}^\infty 2P_n(e)\cos(nl)
 \\
 \label{eqn:series-espansion-2}
 \frac{\aaa^3}{\rr^3}\cos(2g+2\nu)&=
 \sum_{n=-\infty}^\infty Q_n(e)\cos(2g+nl)
\end{align}
where $P_n$ and $Q_n$ are suitable \emph{Hansen coefficients}
%

Notice that $e=\left(1-\dfrac{G^2}{L^2}\right)^{1/2}$
so  $P_n=P_n(L,G)$, $Q_n=Q_n(L,G)$.

From \eqref{eqn:series-espansion-1} and \eqref{eqn:series-espansion-2} we infer that:
\begin{align*}
 \HH_{1,sec}(L,G,H)=&
 \frac{\mu^4R^2}{4L^3G^5}
 \left(
       -G^2+3H^2
     \right)
 \\
 \HH_{1,per}(L,G,H)=&
  \frac{\mu^4R^2}{4L^6G^2}
   \left[
     \left(
       -G^2+3H^2
     \right)
     \left(
     \frac{\aaa^3}{\rr^3}-
     \frac{L^3}{G^3}
     \right)
     +
     \right. 
     \\
     &\qquad +\left.
     \left(
       3G^2-3H^2
     \right)\frac{\aaa^3}{\rr^3}\cos(2g+2\nu)
   \right]
\end{align*}

Using the above formula we get from conditions \eqref{condition-B-S1} 

\begin{equation}\label{eqn:expression-of-K1-I}
 \KK_1(L'',G'',H'')=
 \frac{\mu^4R^2}{4{L''}^3{G''}^5}
 \left(
   -{G''}^2+3{H''}^2
 \right)
\end{equation}
while
\eqref{condition-B-P1} turns into:
\begin{multline}\label{eqn:condition-on-S1-I}
 \frac{\partial S_1}{\partial l}(L'',G'',H'',l,g)=
 \frac{\mu^2R^2}{4{L''}^3{G''}^2}\times
 \\
 \times
   \left[
     \left(
       -{G''}^2+3{H''}^2
     \right)
     \left(
       \frac{\aaa^3}{\rr^3}-\frac{{L''}^3}{{G''}^3}
     \right)
     +
     3
     \left(
       {G''}^2-{H''}^2
     \right)\frac{\aaa^3}{\rr^3}\cos(2g+2\nu)
   \right].
\end{multline}
If we use the Fuorier expansions mentioned above, we get:
\begin{multline*}
 S_1(L'',G'',H'',l,g)=
 \frac{\mu^2R^2}{4{L''}^3{G''}^2}
  \left[
     \left(
       3{H''}^2-{G''}^2
     \right)
     \sum_{n=1}^\infty \frac{2P_n}{n}(L'',G'')\sin(nl)
  \right.
    +
  \\
  \left.
     3
     \left(
       {G''}^2-{H''}^2
     \right)
     \sum_{n=-\infty}^\infty \frac{Q_n}{n}(L'',G'')\sin(2g+nl)
   \right]
\end{multline*}
(notice that $S_1$ does not depend on $h$).
However, using the formulas:
\begin{equation}\label{eqn:change-of-variable-l-nu}
 dl=\frac{L}{G}\frac{r^2}{\aaa^2}d\nu
 \quad,\quad
 \frac{\aaa}{r}=
 \frac{1+e\cos(\nu)}{1-e^2}=
 \frac{L^2}{G^2}\left(1+\sqrt{1-\frac{G^2}{L^2}}\cos(\nu)\right)
\end{equation}
we can also find a closed formula for $S_1$. From \eqref{eqn:condition-on-S1-I} we get:
\begin{multline}
 S_1(L'',G'',h'',l,g)=
 \frac{\mu^2R^2}{4{G''}^5}({G''}-3{H''}^2)l+
 \\
 \frac{\mu^2R^2}{4{L''}^3{G''}^2}
 \int\left[
     \left(
       -{G''}^2+3{H''}^2
     \right)
     \left(\frac{L}{G}\right)
     \frac{1+e\cos(\nu)}{1-e^2}
     +
     \right.
     \\
     \left.
     3\left(
       {G''}^2-{H''}^2
     \right)
     \left(\frac{L}{G}\right)
     \frac{1+e\cos(\nu)}{1-e^2}
     \cos(2g+2\nu)
   \right] d\nu
\end{multline}
which yields:
\begin{multline}
 S_1(L'',G'',H'',l,g)=
 \frac{\mu^2R^2}{4{G''}^5}
 \left[
 ({G''}^2-3{H''}^2)(l-\nu-e\sin(\nu))
 \right.+
 \\
 \left. 
    ({G''}^2-{H''}^2)
    \left(
       \frac{3}{2}\sin(2g+2\nu)+
       \frac{3e}{2}\sin(2g+\nu)+
       \frac{1}{2}\sin(2g+3\nu)
    \right)
 \right]
\end{multline}

In the same way from \eqref{condition-C-S-2} we get:

\begin{equation}
 \KK_2(L'',G'',H'')=
 \secl{
 \underbrace{
 \frac{1}{2}\frac{\partial^2\HH_0}{\partial L^2}
 \left(
   \frac{\partial S_1}{\partial l}
 \right)^2
   +
\frac{\partial\HH_1}{\partial L}\frac{\partial S_1}{\partial l} 
   +
\frac{\partial\HH_1}{\partial G}\frac{\partial S_1}{\partial g} 
 }_{=:\bar\HH(L'',G'',H'',l,g)}
 }
\end{equation}
(every term on the R.H.S. is computed in $(L'',G'',H'',l,h,g)$) 

and from \eqref{condition-C-P-2}:
\begin{equation}
 \frac{\partial S_2}{\partial l}(L'',G'',H'',l,g)=
 -
 \frac{2\mu^2R^2}{{L''}^3}
 \perd{
 \frac{1}{2}
 \frac{\partial^2\HH_0}{\partial L^2}
 \left(
   \frac{\partial S_1}{\partial l}
 \right)^2
   +
\frac{\partial\HH_1}{\partial L}\frac{S_1}{\partial l} 
   +
\frac{\partial\HH_1}{\partial G}\frac{S_1}{\partial g} 
 }
\end{equation}

%
%
%

If we compute the term $\bar\HH$ on the right hand side of \eqref{condition-C-S-2}, we get:
\[
 \bar\HH=\bar\HH_0+
 \bar\HH_3\left(\frac{\aaa}{\rr}\right)^3+
 \bar\HH_6\left(\frac{\aaa}{\rr}\right)^3
\]
with:
\begin{align*}
 \bar\HH_0(L,G,H)=&
 \frac{3 \mu ^6 R^4 \left(G^2-3 H^2\right)^2}{32 G^{10} L^4}
 \\
 \bar\HH_3\nu,g)=&
 -\frac{3 \mu ^6 R^4}{16 G^8 L^7}
 \left(-12 e G^2 H^2 L \sin ^2(g+\nu) \cos (2 g+\nu)
 \right. 
 \\
 &\left. 
 -4 e G^2 H^2 L \sin ^2(g+\nu) \cos (2 g+3 \nu)+
 \right. 
 \\
 &\left. 
 12 e H^4 L \sin ^2(g+\nu) \cos (2 g+\nu)+
 \right. 
 \\
 &\left. 
 4 e H^4 L \sin ^2(g+\nu) \cos (2 g+3 \nu)+
 \right. 
 \\
 &\left. 
 3 G^5 \cos (2 (g+\nu))
 -12 G^3 H^2 \cos (2 (g+\nu))
 \right. 
 \\
 &\left. 
 -12 G^2 H^2 L \sin ^2(g+\nu) \cos (2 (g+\nu))+
 \right. 
 \\
 &\left. 
 9 G H^4 \cos (2 (g+\nu))+
 12 H^4 L \sin ^2(g+\nu) \cos (2 (g+\nu))
 \right. 
 \\
 &\left. 
 -G^5+6 G^3 H^2-9 G H^4\right)
 \\
 \bar\HH_6(L,G,H,\nu,g)=&
 -\frac{9 \mu ^6 R^4}{32 G^4 L^{10}}
 \left(9 G^4 \cos ^2(2 (g+\nu))-6 G^4 \cos (2 (g+\nu))
 \right. 
 \\
 &\left. 
 -18 G^2 H^2 \cos ^2(2 (g+\nu))+
 24 G^2 H^2 \cos (2 (g+\nu))+
 \right. 
 \\
 &\left. 
 9 H^4 \cos ^2(2 (g+\nu))
 -18 H^4 \cos (2 (g+\nu))+
 \right. 
 \\
 &\left. 
 G^4-6 G^2 H^2+9 H^4\right)
\end{align*}
Using \eqref{eqn:change-of-variable-l-nu} we can take the average
of $\bar\HH$ with  respect to $l$ and $g$ (by passing to the integral in $\nu$). We hence get:
\begin{multline*}
 \KK_2(L'',G'',H'')=
 -\frac{3 \mu ^6 R^4}{512 {G''}^{13} {L''}^5}
 \left(99 {G''}^8-48 {G''}^7 {L''}
 \right.
 \\
 \left.
 -2 {G''}^6 \left(167 {H''}^2+
 495 {L''}^2\right)+
 288 {G''}^5 {H''}^2 {L''}+
 \right.
 \\
 \left.
 {G''}^4 
 \left(307 {H''}^4+2860 {H''}^2 {L''}^2+1155 {L''}^4\right)
 \right.
 \\
 \left.
 -432 {G''}^3 {H''}^4 {L''}-70 {G''}^2 \left(37 {H''}^4 {L''}^2+45 {H''}^2 {L''}^4\right)+2835 {H''}^4 {L''}^4\right)
\end{multline*}
All the computations above have been checked in the Mathematica notebook avaliable at
https://www.wolframcloud.com/obj/claudiosaccon/VonZeipel2.nb\ . The notations inside the notebook
should be self--explanatory: 
\begin{gather*}
 \texttt{H0}=\HH_0,\quad
 \texttt{H1}=\HH_1,\quad
 \texttt{H1sec}=\HH_{1,sec},\quad
 \texttt{K1}=\KK_1,\quad
 \texttt{K2}=\KK_2,
 \\
 \texttt{S1}=S_1,\quad
 \texttt{DS1l}=\frac{\partial S_1}{\partial l},\quad
 \texttt{DS1g}=\frac{\partial S_1}{\partial g},\quad
 \texttt{S2}=S_2,
\end{gather*}
while the terms  \texttt{RH*} are sum up to form $\HH_2$. The rows of the notebook should be evaluated
in sequence to provide the required expressions of
$S=S_0+S_1+S_2$ and the corresponding derivatives. 
As said before  also $S_2$ is computed according to
our formulas. Since $S_2$  is made up by a very long list of terms, we do not insert it here. 
Notice that
\cite{Brouwer1959} considers
$S_2$ as an upper order quantity  and  therefore neglectes it (making no attempts to compute it).

\appendix

\section{The Hamiltonian approach. Canonical transformations and generating functions}

Let
$\HH:\real^{N}\times\real^{N}\to \real$ be a smooth function (the \emph{Hamiltonian}), we
we say that $p,q:\real\to\real^N$ are solutions to the Hamiltonian equations, if: 
\begin{equation}\label{eqn:hamilton-formulation}
 \left\{
   \begin{aligned}
    &\dot p
    =
    -\frac{\partial}{\partial q}\HH(p,q)^t,
    \\
    &\dot q
    =
    \frac{\partial}{\partial p}\HH(p,q)^t
   \end{aligned}
 \right.
\end{equation}
As well known \eqref{eqn:hamilton-formulation} describe the dynamics of a system only
effected by conservative forces,
according  the laws of Classical Mechanics.  
The variables $q$ are called ``coordinates'' while $p$ are the ``momenta''.
 of the system.
 
In case we need to consider the presence of dissipative forces we
add two two terms to \eqref{eqn:hamilton-formulation}:
\begin{equation}\label{eqn:hamilton-with-external-force}
 \left\{
   \begin{aligned}
    &\dot p
    =
    -\frac{\partial}{\partial q}\HH(p,q)^t
    +f_1(p,q),
    \\
    &\dot q
    =
    \frac{\partial}{\partial p}\HH(p,q)^t
    +f_2(p,q).
   \end{aligned}
 \right.
\end{equation}
where $f_1,f_2:\real^N\times\real^{N}\to\real^N$. It will be sometimes convenient to indicate 
\(
 v=\begin{pmatrix}
     p\\q                    
 \end{pmatrix}
 \)
 and rewrite
\eqref{eqn:hamilton-with-external-force} as:
\begin{equation}\label{eqn:hamiltonian-concise}
 \dot v=J\nabla \HH(v)+f,
\end{equation}
where:
\[
 \nabla \HH(p,q)=
 \begin{pmatrix}
  \dfrac{\partial}{\partial p}\HH(p,q)
  \\
  \ 
  \\
  \dfrac{\partial}{\partial q}\HH(p,q)
 \end{pmatrix}
 \ ,\ 
 J:=
 \begin{pmatrix}
  0&-I_N
  \\
  \\
  I_N&0
 \end{pmatrix}
  \ ,\ 
  f(v)=f(p,q):=
 \begin{pmatrix}
  f_1(p,q)
  \\
  \ 
  \\
  f_2(p,q)
 \end{pmatrix}
\]
($I_N$ denotes the $N$-dimensional identity matrix). 
Notice that $J^2=-I_N$.

\begin{dfn}
 Let $\Phi:\real^{2N}\to\real^{2N}$ be a diffeomorphism i.e. a bijection such that $\Phi$ and $\Phi^{-1}$ are smooth. We say that $\Phi$ is a \emph{canonical transformation},  if for any solution $v$ of 
 \(
 \dot v=J\nabla \HH(v)
 \),  
 the transformed function $V:=\Phi(v)$ is a solution of
 \(
  \dot V=J\nabla \KK(V),
 \)
where $\KK(V):=\HH(\Phi^{-1}(V))$. This is usually expressed by saying that ``$\Phi$ preserves the form of Hamilton's equations''.
\end{dfn}

\begin{thm}
 Let $\Phi$ be a diffeomorphism. 
 If $\dfrac{\partial\Phi}{\partial v}$ is symplectic, that is if:  
 \begin{equation}\label{eqn:jacobian-symplectic}
  \frac{\partial\Phi}{\partial v}
  J
  \frac{\partial\Phi}{\partial v}^t
  =
  J
 \end{equation}
 holds,
 then $\Phi$ is a canonical transformation.
\end{thm}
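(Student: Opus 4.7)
The plan is to chase the transformation rules through the chain rule and then invoke the symplectic identity \eqref{eqn:jacobian-symplectic} at the very end. Let $v(t)$ be a solution of $\dot v = J\nabla\HH(v)$ and set $V(t) := \Phi(v(t))$; I want to show $\dot V = J\nabla\KK(V)$ with $\KK(V) = \HH(\Phi^{-1}(V))$.

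First I would differentiate $V = \Phi(v)$ directly in $t$ to obtain
\[
\dot V = \frac{\partial\Phi}{\partial v}(v)\,\dot v = \frac{\partial\Phi}{\partial v}(v)\,J\,\nabla\HH(v).
\]
So the task reduces to rewriting $\nabla\HH(v)$ in terms of $\nabla\KK(V)$. This is the only genuine computation: from $\HH(v) = \KK(\Phi(v))$ and the chain rule (taking transposes, since $\nabla$ is a column vector),
\[
\nabla\HH(v) = \frac{\partial\Phi}{\partial v}(v)^{t}\,\nabla\KK(\Phi(v)) = \frac{\partial\Phi}{\partial v}(v)^{t}\,\nabla\KK(V).
\]
Equivalently one can start from $\KK(V) = \HH(\Phi^{-1}(V))$ and use $\frac{\partial\Phi^{-1}}{\partial V}(V) = \left(\frac{\partial\Phi}{\partial v}(v)\right)^{-1}$, obtained by differentiating the identity $\Phi^{-1}\circ\Phi = \mathrm{id}$; either route gives the same transpose relation.

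Substituting this into the expression for $\dot V$ yields
\[
\dot V = \frac{\partial\Phi}{\partial v}(v)\,J\,\frac{\partial\Phi}{\partial v}(v)^{t}\,\nabla\KK(V),
\]
and now the symplectic hypothesis \eqref{eqn:jacobian-symplectic} collapses the matrix sandwich to $J$, giving $\dot V = J\nabla\KK(V)$, as required.

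I do not expect any real obstacle: there is no topological content (the diffeomorphism assumption is built in), and the argument is purely algebraic once the chain rule is applied. The one bookkeeping point worth being careful about is the placement of transposes — since $\nabla\HH$ is a column while $\frac{\partial\HH}{\partial v}$ is a row, the chain rule naturally produces $\frac{\partial\Phi}{\partial v}^{t}$ on the gradient side, which is exactly what is needed to make \eqref{eqn:jacobian-symplectic} applicable. No assumption on $f$ is needed here because the definition of canonical transformation only concerns the conservative case $f = 0$.
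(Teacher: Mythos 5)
Your proof is correct and follows essentially the same route as the paper's: differentiate $V=\Phi(v)$, use the chain rule on $\HH=\KK\circ\Phi$ to express $\nabla\HH(v)$ as $\frac{\partial\Phi}{\partial v}^{t}\nabla\KK(V)$, and collapse $\frac{\partial\Phi}{\partial v}J\frac{\partial\Phi}{\partial v}^{t}$ to $J$ via \eqref{eqn:jacobian-symplectic}. The only cosmetic difference is that you work with column gradients throughout while the paper writes the chain rule in row (Jacobian) form before transposing.
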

\begin{proof}
 From the definition of $\KK$ we have: $\KK(\Phi(v))=\HH(v)$.
 Then:
 \[
  \frac{\partial \HH(v)}{\partial v}=
  \frac{\partial \KK(V)}{\partial V}
  \frac{\partial\Phi(v)}{\partial v}.
 \]
%
%
%
%
 Now, if
 \(
 \dot v=J\nabla \HH(v)
 \)
 and
 $V=\Phi(v)$, we have:
 \begin{equation}\label{eqn:equation-Phi(v)}
  \dot V=\frac{\partial\Phi(v)}{\partial v}\dot v=
  \frac{\partial\Phi(v)}{\partial v}
  J
  \frac{\partial \HH(v)}{\partial v}^t=
  \frac{\partial\Phi(v)}{\partial v}
  J
  \frac{\partial\Phi(v)}{\partial v}^t
  \frac{\partial \KK(V)}{\partial V}^t.  
 \end{equation}

 We can conclude that, if \eqref{eqn:jacobian-symplectic} holds,
 then
 \(
  \dot V=J\nabla \KK(V).
 \)
%
%
\end{proof}

\begin{rmk}
 As shown in \thetag{1} of \eqref{prop:properties-symplectic-matrices} the
 equality
 \eqref{eqn:jacobian-symplectic} is equivalent to:

 \begin{equation}\label{eqn:properties-symplectic-matrices-subblocks}
  \frac{\partial P}{\partial p} \frac{\partial P}{\partial q}^t 
  =
  \frac{\partial P}{\partial q}\frac{\partial P}{\partial p}^t
  \qquad
  \frac{\partial Q}{\partial p} \frac{\partial Q}{\partial q}^t 
  =
  \frac{\partial Q}{\partial q}\frac{\partial Q}{\partial p}^t
  \qquad
  \frac{\partial P}{\partial p}\frac{\partial Q}{\partial q}^t
  -
  \frac{\partial P}{\partial q}\frac{\partial Q}{\partial p}^t
  =I_N
 \end{equation}
 or equivalently to:
  \begin{equation}\label{eqn:properties-symplectic-matrices-subblocks-T}
  \frac{\partial P}{\partial p}^t \frac{\partial Q}{\partial p} 
  =
  \frac{\partial Q}{\partial p}^t\frac{\partial P}{\partial p}
  \qquad
  \frac{\partial P}{\partial q}^t \frac{\partial Q}{\partial q} 
  =
  \frac{\partial Q}{\partial q}^t\frac{\partial P}{\partial q}
  \qquad
  \frac{\partial P}{\partial p}^t\frac{\partial Q}{\partial q}
  -
  \frac{\partial Q}{\partial p}^t\frac{\partial P}{\partial q}
  =I_N
 \end{equation}
 
\end{rmk}

\begin{rmk}
 If $\dfrac{\partial\Phi}{\partial v}$ is symplectic, then the 
 change of variables $V=\Phi(v)$ transforms the
 nonconservative equation \eqref{eqn:hamilton-with-external-force}
 into:
 \[
  \dot V=J\nabla \KK(V)+F(V)
 \]
 where $\KK=\HH\circ\Phi^{-1}$ and:
 \begin{equation*}
  F(V)=\frac{\partial\Phi}{\partial v}(\Phi^{-1}(V))f(\Phi^{-1}(V))
  =
  \left(\frac{\partial\Phi^{-1}}{\partial V}(V)\right)^{-1}f(\Phi^{-1}(V)).
  \end{equation*}
  But since $\dfrac{\partial\Phi}{\partial v}$ is symplectic, so is
  $\dfrac{\partial\Phi^{-1}}{\partial V}$ (see section \ref{sect:symplectic} and we have:
  \[
   \frac{\partial\Phi^{-1}(V)}{\partial V}=
   \begin{pmatrix}
    \dfrac{\partial p(P,Q)}{\partial P}
    &\dfrac{\partial p(P,Q)}{\partial Q}
    \\
    \dfrac{\partial q(P,Q)}{\partial P}
    &\dfrac{\partial q(P,Q)}{\partial Q}
   \end{pmatrix}^{-1}=
   \begin{pmatrix}
    \dfrac{\partial q(P,Q)}{\partial Q}^t
    &-\dfrac{\partial p(P,Q)}{\partial Q}^t
    \\
    -\dfrac{\partial q(P,Q)}{\partial P}^t
    &\dfrac{\partial p(P,Q)}{\partial P}^t.
   \end{pmatrix}
  \]
  If $\Phi^{-1}(P,Q)=\begin{pmatrix}p(P,Q)\\q(P,Q)\end{pmatrix}$,
  we have finally:
  \begin{align}\label{formula-transformation-nonconservative-force}
   F_1(P,Q)=&
   \dfrac{\partial q(P,Q)}{\partial Q}^tf_1(\Phi^{-1}(P,Q))-
   \dfrac{\partial p(P,Q)}{\partial Q}^tf_2(\Phi^{-1}(P,Q))
   \\
   F_2(P,Q)=&-
   \dfrac{\partial q(P,Q)}{\partial P}^tf_1(\Phi^{-1}(P,Q))+
   \dfrac{\partial p(P,Q)}{\partial P}^tf_2(\Phi^{-1}(P,Q))
  \end{align}

\end{rmk}

A possible way of obtaining a canonical transformation is
via a generating function.
\begin{dfn}[generating functions]
 Let $S:\real^N\times\real^N\to\real$ be a smooth function.
 We see $S$ as $S(P,q)$ (a combination of ``new variables''
 $P$ and ``old ones'' $q$).
 It is simple to check that the equations:
  \begin{equation}\label{eqn:definition-canonical}
   p=\frac{\partial S}{\partial q}(P,q)
   \quad,\quad
   Q=\frac{\partial S}{\partial P}(P,q)
  \end{equation}
 implicitely define a canonical transformation 
 $(P,Q)=\Phi(p,q)$, in the sense that 
 $\Phi:\real^N\times\real^N\to\real^N\times\real^N$ is canonical and:
 \[
  (P,Q)=\Phi(p,q)
  \Leftrightarrow
  \mbox{ \eqref{eqn:definition-canonical} holds }
  \qquad\qquad
  \forall\ p,q,P,Q.  
 \]
%
%
 
 Such an $S$ will be called a 
 \emph{generating function}
 for the canonical transformation $\Phi$.

\end{dfn}

\begin{thm}\label{thm:generating-function}
  Let
  \(
   \Phi(p,q)=
   \begin{pmatrix}
    P(p,q)
    \\
    Q(p,q)
   \end{pmatrix}
  \)
  be a diffeomorphism and 
  $\dfrac{\partial\Phi}{\partial v}$
  be symplectic.
  
  Let $(p_0,q_0)$ be a point such that 
  $\det\left(\dfrac{\partial\Phi_1}{\partial p}(p_0,q_0)\right)\neq0$
  and let $(P_0,Q_0)=\Phi(p_0,q_0)$.
  Then there exist a neighboorhood $W$ of $(P_0,q_0)$,
  a neighboorhood $W_1$ of $(p_0,Q_0)$, and  a
  smooth function $S:W\to\real$ such that
  $S$ is a generating function for $\Phi$ in $W$.

\end{thm}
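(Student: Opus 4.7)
The plan is to locally invert the map $p\mapsto P(p,q)$, obtaining $p$ as a function of the mixed variables $(P,q)$, and then to construct $S$ as a local potential of the 1-form
\[
\omega := p(P,q)\cdot dq + \tilde Q(P,q)\cdot dP,
\]
where $\tilde Q(P,q):=Q(p(P,q),q)$. The integrability (closedness) conditions for $\omega$ will turn out to be precisely the symplectic relations \eqref{eqn:properties-symplectic-matrices-subblocks-T}.

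First I would apply the implicit function theorem to the map $(p,q)\mapsto(P(p,q),q)$: its Jacobian at $(p_0,q_0)$ is block lower-triangular with determinant equal to $\det\bigl(\partial P/\partial p\bigr)(p_0,q_0)\ne 0$, so there exist a neighborhood $U$ of $(p_0,q_0)$ mapped diffeomorphically onto a neighborhood $W$ of $(P_0,q_0)$, which may be taken to be a product of convex sets. Denoting the inverse by $(P,q)\mapsto(p(P,q),q)$, I set $\tilde Q(P,q):=Q(p(P,q),q)$, and let $W_1$ be the image of $U$ under $(p,q)\mapsto(p,Q(p,q))$, which is a neighborhood of $(p_0,Q_0)$.

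Next I would verify the three closedness conditions for $\omega$, namely the symmetry of $\partial p/\partial q$, the symmetry of $\partial \tilde Q/\partial P$, and the mixed identity $\partial p_i/\partial P_j=\partial\tilde Q_j/\partial q_i$. Implicit differentiation of $P(p(P,q),q)\equiv P$ yields $\partial p/\partial P=(\partial P/\partial p)^{-1}$ and $\partial p/\partial q=-(\partial P/\partial p)^{-1}\,\partial P/\partial q$; the chain rule then gives analogous formulas for the partials of $\tilde Q$. Substituting these expressions into the three closedness conditions and multiplying through by suitable copies of $\partial P/\partial p$ or its transpose, each one becomes equivalent to one of the three identities listed in \eqref{eqn:properties-symplectic-matrices-subblocks-T}. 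Once $\omega$ is closed on the convex (hence simply connected) set $W$, the Poincar\'e lemma produces a smooth $S:W\to\real$ with $dS=\omega$, i.e.\ $\partial S/\partial q=p(P,q)$ and $\partial S/\partial P=\tilde Q(P,q)$, and this is the desired generating function. Agreement of the induced canonical transformation with $\Phi$ on $U$ is then tautological: for $(p,q)\in U$, setting $P:=P(p,q)$, one has $p=p(P,q)=\partial S/\partial q(P,q)$ and $Q(p,q)=\tilde Q(P,q)=\partial S/\partial P(P,q)$.

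The main technical step is the verification of the mixed integrability condition $\partial p_i/\partial P_j=\partial\tilde Q_j/\partial q_i$: whereas each of the two symmetry conditions translates into a single one of the homogeneous identities of \eqref{eqn:properties-symplectic-matrices-subblocks-T}, the mixed condition produces a relation involving $I_N$ and requires the non-homogeneous identity $\partial P/\partial p^t\,\partial Q/\partial q-\partial Q/\partial p^t\,\partial P/\partial q=I_N$ in an essential way, together with careful bookkeeping of the inverse of $\partial P/\partial p$ appearing through the implicit function theorem.
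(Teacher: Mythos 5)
Your proposal is correct and follows essentially the same route as the paper: both apply the implicit function theorem (justified by $\det(\partial\Phi_1/\partial p)\neq 0$) to express $(p,Q)$ as functions of the mixed variables $(P,q)$, verify that the resulting Jacobian is symmetric --- equivalently, that your 1-form $\omega$ is closed --- by reducing the three conditions to the symplectic identities \eqref{eqn:properties-symplectic-matrices-subblocks-T} and \eqref{eqn:properties-symplectic-matrices-subblocks}, and then obtain $S$ as a local potential. The only cosmetic differences are that you phrase the final step via the Poincar\'e lemma on a convex neighborhood and invert $p\mapsto P(p,q)$ first rather than solving for $(Q,p)$ jointly as the paper does.
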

\begin{proof}
 Let $G:\real^{N}\times\real^N\times\real^N\times\real^N$ be
 defined by:
 \[
  G(P,Q,p,q):=
  \begin{pmatrix}
   P
   \\
   Q
  \end{pmatrix}
  -\Phi(p,q)=
  \begin{pmatrix}
   P
   \\
   Q
  \end{pmatrix}
  -
  \begin{pmatrix}
   \Phi_1(p,q)
   \\
   \Phi_2(p,q)
  \end{pmatrix}.
 \]
 We have:
 \[
  \frac{\partial G}{\partial(P,Q,p,q)}=
  \begin{pmatrix}
   I_N&0
   &-\dfrac{\partial\Phi_1}{\partial p}
   &-\dfrac{\partial\Phi_1}{\partial q}
   \\
   0&I_N\
   &-\dfrac{\partial\Phi_2}{\partial p}
   &-\dfrac{\partial\Phi_2}{\partial q}
  \end{pmatrix}.
 \]
 From the assumption we derive that:
 \[
  \det 
  \left(
   \dfrac{\partial G}{\partial(Q,p)}
  \right)=
  \det
  \left(
   \begin{pmatrix}
   0
   &-\dfrac{\partial\Phi_1}{\partial p}
   \\
   I_N
   &-\dfrac{\partial\Phi_2}{\partial p}
  \end{pmatrix}
  \right)=
  \det\left(
  \frac{\partial\Phi_1}{\partial p}
  \right)
  \neq0
 \]
 By the implicit function Theorem there exist
 a neighboorhood $W$ of $(P_0,q_0)$, a neighboorhood $W_1$
 of $(Q_0,p_0$ and a map
 $\Psi:W\to\real^N\times\real^N$ such that:
  \[
   (P,Q)=\Phi(p,q)
   \quad\Leftrightarrow\quad
   (Q,p)=\Psi(P,q).
  \]
  Moreover:
  \begin{multline*}
   \frac{\partial\Psi}{\partial(P,q)}=
   -
   \begin{pmatrix}
   0
   &-\dfrac{\partial\Phi_1}{\partial p}
   \\
   I_N
   &-\dfrac{\partial\Phi_2}{\partial p}
  \end{pmatrix}^{-1}
  \begin{pmatrix}
   I_N
   &-\dfrac{\partial\Phi_1}{\partial q}
   \\
   0
   &-\dfrac{\partial\Phi_2}{\partial q}
  \end{pmatrix}=
   \\
   \begin{pmatrix}
   \dfrac{\partial\Phi_2}{\partial p}\left(\dfrac{\partial\Phi_1}{\partial p}\right)^{-1}
   &-I_N
   \\
   \left(\dfrac{\partial\Phi_1}{\partial p}\right)^{-1}
   &0
  \end{pmatrix}
  \begin{pmatrix}
   I_N
   &-\dfrac{\partial\Phi_1}{\partial q}
   \\
   0
   &-\dfrac{\partial\Phi_2}{\partial q}
  \end{pmatrix}=
  \\
   \begin{pmatrix}
   \dfrac{\partial\Phi_2}{\partial p}\left(\dfrac{\partial\Phi_1}{\partial p}\right)^{-1}
   &
   -\dfrac{\partial\Phi_2}{\partial p}\left(\dfrac{\partial\Phi_1}
   {\partial p}\right)^{-1}
   \dfrac{\partial\Phi_1}{\partial q}
   +\dfrac{\partial\Phi_2}{\partial q}
  \\
  \left(\dfrac{\partial\Phi_1}{\partial p}\right)^{-1}
  &
  \left(\dfrac{\partial\Phi_1}{\partial p}\right)^{-1}
  \dfrac{\partial\Phi_1}{\partial q}
   \end{pmatrix}.
  \end{multline*}
 We claim that the above matrix is symmetric. The upper left
 block is symmetric since:
 \[
  \dfrac{\partial\Phi_2}{\partial p}
  \left(\dfrac{\partial\Phi_1}{\partial p}\right)^{-1}=
  \left(\dfrac{\partial\Phi_1}{\partial p}\right)^{-t}
  \left(\dfrac{\partial\Phi_2}{\partial p}\right)^t
  \Leftrightarrow
  \left(\dfrac{\partial\Phi_1}{\partial p}\right)^{t}
  \dfrac{\partial\Phi_2}{\partial q}=
  \left(\dfrac{\partial\Phi_2}{\partial p}\right)^t
  \dfrac{\partial\Phi_1}{\partial p}
 \]
 and the latter is exactly the first equality in \eqref{eqn:properties-symplectic-matrices-subblocks-T}. 
 In the same way the lower right block is symmetric:
  
   \[
  \left(\dfrac{\partial\Phi_1}{\partial p}\right)^{-1}
  \dfrac{\partial\Phi_1}{\partial q}=
  \left(\dfrac{\partial\Phi_1}{\partial q}\right)^{t}
  \left(\dfrac{\partial\Phi_1}{\partial p}\right)^{-t}
  \Leftrightarrow
  \dfrac{\partial\Phi_1}{\partial q}
  \left(\dfrac{\partial\Phi_1}{\partial p}\right)^t=
  \dfrac{\partial\Phi_1}{\partial p}
  \left(\dfrac{\partial\Phi_1}{\partial q}\right)^t
 \]
 which is  the first equality in \eqref{eqn:properties-symplectic-matrices-subblocks}. Finally:
 \begin{multline*}
  \left(\dfrac{\partial\Phi_1}{\partial p}\right)^{-t}=
  -\dfrac{\partial\Phi_2}{\partial p}\left(\dfrac{\partial\Phi_1}
   {\partial p}\right)^{-1}
   \dfrac{\partial\Phi_1}{\partial q}
   +\dfrac{\partial\Phi_2}{\partial q}
   \Leftrightarrow
   \\
   I_N=
   -\dfrac{\partial\Phi_2}{\partial p}\left(\dfrac{\partial\Phi_1}
   {\partial p}\right)^{-1}
   \underbrace{\dfrac{\partial\Phi_1}{\partial q}
   \left(\dfrac{\partial\Phi_1}{\partial p}\right)^{t}}_{
   =\frac{\partial\Phi_1}{\partial p}
   \left(\frac{\partial\Phi_1}{\partial q}\right)^{t}
   }
   +\dfrac{\partial\Phi_2}{\partial q}
   \left(\dfrac{\partial\Phi_1}{\partial p}\right)^{t}
   \Leftrightarrow
   \\
   I_N=
   -\dfrac{\partial\Phi_2}{\partial p}
   \left(\dfrac{\partial\Phi_1}{\partial q}\right)^t
   +\dfrac{\partial\Phi_2}{\partial q}
   \left(\dfrac{\partial\Phi_1}{\partial p}\right)^{t}
   \Leftrightarrow
   \\
   I_N=(I_N^t=)
   -\dfrac{\partial\Phi_1}{\partial q}
   \left(\dfrac{\partial\Phi_2}{\partial p}\right)^t
   +\dfrac{\partial\Phi_1}{\partial p}
   \left(\dfrac{\partial\Phi_2}{\partial q}\right)^{t}
 \end{multline*}
  and the last equality coincides with the third one in
  \eqref{eqn:properties-symplectic-matrices-subblocks}.
  
  Since $\dfrac{\partial\Psi}{\partial(P,q)}$ is symmetric, there
  exists a ``potential'' $S:\to\real$ such that:
  \[
   \Psi(P,q)=\nabla S(P,q)\Leftrightarrow
   \Psi_1(P,q)=\frac{\partial S}{\partial P}(P,q)
   \ ,\ 
   \Psi_2(P,q)=\frac{\partial S}{\partial q}(P,q)
  \]
  for all $(P,q)$ in $W$. This concludes the proof.
\end{proof}

%
%
%
%

\section{Simplectic matrices}\label{sect:symplectic}

\begin{dfn}
 A $2N\times2N$ matrix $M$ is said to be \emph{symplectic}
 if $MJM^t=M$.
\end{dfn}

\begin{prop}[Properties of symplectic matrices]\label{prop:properties-symplectic-matrices}
 Let $M,M_1$ be $2N\times2N$ matrices and suppose that
 \(
 \displaystyle{
 M=
 \begin{pmatrix}
  A&B
  \\
  C&D
 \end{pmatrix}
 }
 \), where $A,B,C,D$ are four $N\times N$ matrices.
 \begin{enumerate}
  \item 
  $M$ is symplectic if and only if:
  \[
   AB^t=BA^t
   \quad,\quad
    CD^t=DC^t
   \quad,\quad
   AD^t-BC^t=I_N
  \]
  if and only if:
   \[
   A^tC=C^tA
   \quad,\quad
    B^tD=D^tB
   \quad,\quad
   A^tD-C^tB=I_N
  \]
 \item
 If $M$ and $M_1$ are symplectic, then $MM_1$ is symplectic.
 \item
 If $M$ is symplectic, than $M$ is invertible and 
 \[
  M^{-1}=-JM^tJ=
  \begin{pmatrix}
  D^t&-B^t
  \\
  -C^t&A^t
  \end{pmatrix}
  .
 \]
 \item
 If $M$ is symplectic, than $M^t$ is symplectic.
 \end{enumerate}
\end{prop}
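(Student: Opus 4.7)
The plan is direct block-matrix computation, leveraging the identities $J^2=-I_{2N}$ and $J^t=-J$, together with the observation that left or right multiplication by $J$ acts on a $2\times 2$ block matrix as a signed swap of rows or columns. I would prove the four items in the order: first characterization of (1), then (3), then (4), then second characterization of (1), and finally (2), since each step feeds into the next.

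For the first characterization in (1), I expand $MJM^t$ into its four $N\times N$ blocks and match with $J$ block-by-block; the three independent identities that emerge are exactly $AB^t=BA^t$, $CD^t=DC^t$, and $AD^t-BC^t=I_N$ (the fourth block equation is just the transpose of the third and hence redundant). For (3), starting from $MJM^t=J$ and $J^{-1}=-J$, right-multiplying by $-J$ yields $M\cdot(-JM^tJ)=I$, whence $M^{-1}=-JM^tJ$; invertibility of $M$ is secured beforehand by the determinant identity $\det(M)^2\det(J)=\det(J)$ with $\det(J)\neq 0$. Expanding $-JM^tJ$ blockwise then produces the explicit formula stated in the proposition.

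For (4), transposing the identity $M^{-1}=-JM^tJ$ (using $J^t=-J$) gives $(M^t)^{-1}=-JMJ$, and left-multiplying by $M^t$ yields $M^tJM=J$, i.e., $M^t$ is symplectic. The second characterization in (1) then follows by applying the first characterization to $M^t$: expand $M^tJM$ blockwise and match with $J$ to read off $A^tC=C^tA$, $B^tD=D^tB$, $A^tD-C^tB=I_N$. Finally, part (2) is a one-liner: $(MM_1)J(MM_1)^t=M(M_1JM_1^t)M^t=MJM^t=J$.

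The main obstacle is purely organizational: ordering the four parts to avoid any circular dependence, and in particular establishing invertibility of $M$ from the determinant identity \emph{before} rearranging the symplectic relation as an equation for $M^{-1}$. Once this ordering is in place, each individual step is a routine block calculation with no conceptual subtlety.
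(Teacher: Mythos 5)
Your proposal is correct and follows essentially the same route as the paper: blockwise expansion of $MJM^t$ for the first characterization, the one-line computation for products, the identity $M(-JM^tJ)=I$ for the inverse, transposition to get that $M^t$ is symplectic, and then the second characterization by applying the first one to $M^t$. The only (immaterial) differences are organizational: you reorder the parts to make the dependence on item (4) explicit, and you secure invertibility via the determinant identity where the paper simply promotes the one-sided inverse to a two-sided one by ``a standard argument.''
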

\begin{proof}
 \begin{enumerate}
  \item 
  We have:
  \begin{multline*}
   MJM^t=
 \begin{pmatrix}
  A&B
  \\
  C&D
 \end{pmatrix}
 \begin{pmatrix}
  0&-I_N
  \\
  I_N&0
 \end{pmatrix}
 \begin{pmatrix}
  A^t&C^t
  \\
  B^t&D^t
  \end{pmatrix}
   =
   \\
  \begin{pmatrix}
  B&-A
  \\
  D&-C
 \end{pmatrix}
 \begin{pmatrix}
  A^t&C^t
  \\
  B^t&D^t
  \end{pmatrix}=
  \begin{pmatrix}
   BA^t-AB^t&BC^t-AD^t
  \\
  DA^t-CB^t-&DC^t-CD^t.
  \end{pmatrix}
  \end{multline*}
  So the condition $MJM^t=J$ turns out to be equivalent to 
  the first set of equalities above (the possible fourth equality is easily derived from the third one, by taking the transpose).
  The second set of equalities follows from the fact that $M^t$
  is symplectic, as shown in \thetag{4}.
  \item
  We have:
  \[
   MM_1J(MM_1)^t=M\underbrace{M_1JM_1^t}_{=J}M^t=MJM^t=J.
  \]
  \item
  We have:
  \[
   M(-JM^tJ)=-MJM^tJ=-JJ=I_N.
  \]
  This implies that $-JM^tJ$ is a left inverse. A standard argument
  shows that $M$ is invertible and $M^{-1}=-JM^tJ$.
  \item
  From the previous statement we have $-JM^tJM=I_N$. By multiplying
  by $J$, we get that $M^t$ is symplectic.
 \end{enumerate}

\end{proof}

\bibliography{Niccolo}
\bibliographystyle{plain}
\end{document}